\newcommand{\R}[0]{\mathbb R}
\newcommand{\Ds}[0]{\mathcal D}
\newtheorem{Th}{Theorem}[section]
\newtheorem{Lemma}{Lemma}[section]
\newtheorem{Prop}[Lemma]{Proposition}
\begin{document}

\title{On the well-posedness of the inviscid 2D Boussinesq equation}
\author{H. Inci}

\maketitle

\begin{abstract}
In this paper we consider the inviscid 2D Boussinesq equation on the Sobolev spaces $H^s(\R^2)$, $s > 2$. Using a geometric approach we show that for any $T > 0$ the corresponding solution map, $(u(0),\theta(0)) \mapsto (u(T),\theta(T))$, is nowhere locally uniformly continuous. 
\end{abstract}

\section{Introduction}\label{section_introduction}
The initial value problem for the inviscid 2d Boussinesq equation is given by
\begin{align}
\label{boussinesq}
\begin{split}
 u_t + (u \cdot \nabla) u &= -\nabla p + \left(\begin{array}{c} 0 \\ \theta \end{array}\right) \\
\theta_t + (u \cdot \nabla) \theta &= 0 \\
\operatorname{div} u &= 0 \\
u(0)=u_0, & \quad \theta(0)=\theta_0
\end{split}
\end{align}
where for $t \in \R, x \in \R^2$, $u(t,x) \in \R^2$ is the velocity of the flow, $\theta(t,x) \in \R$ the temperature and $p(t,x) \in \R$ the pressure. Note that for $\theta \equiv 0$ we recover the incompressible 2d Euler. Local well-posedness of \eqref{boussinesq} is known -- see \cite{dongo}. In particular we have for any $T > 0$ a continuous solution map 
\begin{eqnarray*}
 \Phi_T:U_T \subseteq H^s(\R^2;\R^2) \times H^s(\R^2) &\to& H^s(\R^2;\R^2) \times H^s(\R^2),\\
 (u_0,\theta_0) &\mapsto& (u(T),\theta(T))
\end{eqnarray*}
mapping the initial values contained in the domain of definition $U_T$ to the time $T$ values. Our main result is

\begin{Th}\label{th_nonuniform}
Let $s > 2$. For any $T > 0$ the solution map $\Phi_T$ is nowhere locally uniformly continuous on $U_T$.
\end{Th}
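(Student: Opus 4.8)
The plan is to follow the geometric (Lagrangian) strategy: although the Eulerian solution map $\Phi_T$ exhibits the non-uniform continuity we want, the corresponding \emph{Lagrangian} solution map is smooth, so that all of the pathology is concentrated in the passage from Lagrangian to Eulerian coordinates, i.e.\ in composition and inversion of diffeomorphisms. Concretely, I would first rewrite \eqref{boussinesq} as an ODE on (a bundle over) the group $\Ds^s$ of volume-preserving $H^s$-diffeomorphisms of $\R^2$. Writing $\varphi_t$ for the Lagrangian flow, $\dot\varphi_t = u(t)\circ\varphi_t$, the transport equation shows the temperature is constant along particles, so $\theta(t)=\theta_0\circ\varphi_t^{-1}$, while the velocity equation together with the Leray projection (eliminating $\nabla p$) becomes, after conjugation by $\varphi_t$, a quadratic ODE with a smooth right-hand side --- this is the Boussinesq analogue of the Ebin--Marsden mechanism, in which the apparent loss of derivative in $(u\cdot\nabla)u$ and in the pressure term is absorbed by the composition structure. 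The upshot of this first step is a smooth, in particular locally Lipschitz, local solution map $(u_0,\theta_0)\mapsto(\varphi_T,\dot\varphi_T)$ on $U_T$.

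In the second step I would record the Eulerian reconstruction $u(T)=\dot\varphi_T\circ\varphi_T^{-1}$ and $\theta(T)=\theta_0\circ\varphi_T^{-1}$, and isolate the analytic fact driving the result: while composition $(f,\varphi)\mapsto f\circ\varphi$ and inversion $\varphi\mapsto\varphi^{-1}$ are continuous on $H^s\times\Ds^s$ and $\Ds^s$, they are \emph{not} locally uniformly continuous, since composing an $H^s$ function with a diffeomorphism that is itself only $H^s$ allows a bounded, small-in-norm change of $\varphi$ to produce an order-one change of $f\circ\varphi$ in $H^s$. The key lemma is therefore a purely kinematic non-uniform-continuity statement for pullback, which I would establish by an explicit high-frequency construction.

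For that construction, fix an arbitrary base point $(u_0,\theta_0)\in U_T$ and an arbitrarily small ball around it. I would add compactly supported perturbations built from a high-frequency packet of frequency $\lambda_n\to\infty$ and amplitude $\sim\lambda_n^{-s}$, so that the packet has $H^s$-norm of order one while its $L^2$-norm tends to $0$. Two data sets sharing this packet but differing by a small low-frequency velocity part, of $H^s$-size $\delta_n\to 0$, generate time-$T$ flows whose difference on the support of the packet is an approximate translation by an amount comparable to $\delta_n$; writing $\psi_i$ for the two time-$T$ flow maps, the distance $\|f\circ\psi_1 - f\circ\psi_2\|_{H^s}$ is of order $\lambda_n\delta_n\,\|f\|_{H^s}$ for such translations, so the choice $\delta_n\sim\lambda_n^{-1}$ forces the input difference to tend to $0$ while the output difference (in the reconstructed $u(T)$, or equally in $\theta(T)=\theta_0\circ\varphi_T^{-1}$ if the packet is placed in $\theta_0$) stays bounded below by a fixed positive constant. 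Because the perturbations are compactly supported and of small amplitude, they can be centred at any base point while keeping both data sets inside $U_T$; this is what upgrades the statement from non-uniform continuity near a single point to nowhere local uniform continuity.

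The main obstacle I anticipate is the second and third steps rather than the first: one must make the ``approximate translation'' picture rigorous in $H^s$ for non-integer $s>2$, controlling the genuinely nonlinear dependence of $\varphi_T$ (and of $\varphi_T^{-1}$) on the data and showing that the order-one lower bound on $\|f\circ\psi_1-f\circ\psi_2\|_{H^s}$ is not cancelled by lower-order corrections. Here the smoothness, hence local Lipschitz property, of the Lagrangian solution map from the first step is exactly what guarantees that these corrections are $O(\delta_n)$ and therefore negligible against the $\lambda_n\delta_n\sim 1$ amplification; the careful bookkeeping of which errors are controlled by Lipschitz continuity and which single term carries the unavoidable top-order amplification is where the proof must be most precise.
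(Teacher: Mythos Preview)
Your overall strategy---smooth Lagrangian map, pathology in the Eulerian reconstruction---is exactly the philosophy of the paper, but the concrete mechanism you propose for the lower bound is different. The paper does \emph{not} use high-frequency wave packets and the amplification factor $\lambda_n\delta_n$. Instead it perturbs $\theta_0$ by bumps $\theta^{(n)}$ of \emph{fixed} $H^s$-size $R/2$ whose supports shrink to a point $x^\ast$, and perturbs the velocity by $\tfrac{1}{n}w^\ast$. Using analyticity of $\Psi$ and a second-order Taylor expansion, one shows the two time-$1$ flows satisfy $|\tilde\varphi^{(n)}(x^\ast)-\varphi^{(n)}(x^\ast)|\gtrsim \tfrac{m}{n}$, while the images of the shrinking support have diameter $\lesssim \tfrac{m}{8n}$; hence $\theta^{(n)}\circ(\varphi^{(n)})^{-1}$ and $\theta^{(n)}\circ(\tilde\varphi^{(n)})^{-1}$ have \emph{disjoint} supports, and the lower bound on their $H^s$-difference follows immediately from $\|f+g\|_s\gtrsim\|f\|_s+\|g\|_s$ for well-separated supports, together with the two-sided composition estimate $\|\theta\circ\varphi^{-1}\|_s\simeq\|\theta\|_s$. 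Your high-frequency route is also viable in principle, but it forces you to prove a genuine \emph{lower} bound for $\|f\circ\psi_1-f\circ\psi_2\|_{H^s}$ when $\psi_1,\psi_2$ are not exact translations---a step you describe only heuristically---whereas the paper's disjoint-support trick bypasses that analysis entirely.

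There is one point in your plan that is under-specified and is precisely where the paper invests effort: for an \emph{arbitrary} base point $w_0$, you need the flow difference at the packet location to be genuinely of order $\delta_n$, not $o(\delta_n)$. This amounts to $d_{w_0}\Psi(w^\ast)(x^\ast)\neq 0$ for some admissible $(w^\ast,x^\ast)$, which is not automatic. The paper handles this via a dense-set lemma: since $\Psi$ is analytic and $d_{(0,0)}\Psi(u,0)=u$ (from the scaling identity $\Psi^t(u_0,\theta_0)=\Psi(tu_0,t^2\theta_0)$), the analytic curve $t\mapsto d_{tw_0}\Psi(u,0)(x^\ast)$ is nonzero at $t=0$, hence nonzero for $t_n\uparrow 1$, producing a dense set $S$ of base points where the required non-degeneracy holds. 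Your proposal asserts the perturbations ``can be centred at any base point'' but gives no mechanism guaranteeing the leading-order flow separation does not vanish there; without something like the paper's analyticity-plus-density argument, the ``nowhere'' in the statement is not secured.
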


To prove Theorem \ref{th_nonuniform} we will use a geometric formulation of \eqref{boussinesq}. This formulation is just \eqref{boussinesq} in Lagrangian coordinates, i.e. for $u$ a solution to \eqref{boussinesq} we consider its flow map
\[
 \varphi_t = u \circ \varphi,\quad \varphi(0)=\mbox{id}
\] 
where $\mbox{id}$ is the identity map in $\R^2$. We thus can write the first equation in \eqref{boussinesq} as
\begin{equation}\label{ode_ansatz}
 \varphi_{tt}=(u_t + (u\cdot \nabla) u) \circ \varphi = -\nabla p \circ \varphi + \theta \circ \varphi
\end{equation}
It will turn out that the right handside can be expressed in $\varphi$ and $\theta_0$, which will give as a second order ODE. The functional space for $\varphi$ is the diffeomorphism group
\[
 \Ds^s(\R^2):=\{ \varphi:\R^2 \to \R^2 \;|\; \varphi-\mbox{id} \in H^s(\R^2;\R^2), \quad \det(d_x \varphi) > 0 \quad \forall x \in \R^2 \}
\]
This is a topological group under composition -- see \cite{composition} for the details. The geometric formulation will be essentially the same as in \cite{lagrangian}.\\
The second equation in \eqref{boussinesq} reads as
\[
 \theta \circ \varphi = \theta_0
\]
i.e. $\theta_0$ is transported by the flow. In particular we have at time $T > 0$
\[
 \theta(T) = \theta_0 \circ \varphi(T)^{-1}
\]
To establish the nonuniformity we will use this formula for $\theta(T)$ to get appropriate variations. The same method was used in \cite{euler,b_family,sqg,hyperelastic}.

\section{The geometric formulation}\label{section_geometric}

Throughout this section we assume $s > 2$. To replace $\nabla p$ we proceed as in \cite{lagrangian} and take the divergence in the first equation of \eqref{boussinesq}
\[
 \sum_{i,j=1}^2 \partial_i u_j \partial_j u_i = -\Delta p + \partial_2 \theta
\]
where we used $\operatorname{div}u=0$. By the divergence-freenes we have
\[
 \sum_{i,j=1}^2 \partial_i u_j \partial_j u_i = \sum_{i,j=1}^2 \partial_i \partial_j (u_i u_j)
\]
Using the cut-off $\chi(D)$, with $\chi$ the indicator function of the unit ball in $\R^2$ and $\chi(D)$ the corresponding Fourier multiplier, we can write
\begin{eqnarray*}
 -\nabla p &=& \nabla \chi(D) \Delta^{-1} \sum_{i,j=1}^2 \partial_i \partial_j (u_i u_j) + \nabla (1-\chi(D)) \sum_{i,j=1}^2 \partial_i u_j \partial_j u_i - \Delta^{-1} \nabla \partial_2 \theta \\
&=:& \nabla B(u,u) - \Delta^{-1} \nabla \partial_2 (\theta_0 \circ \varphi^{-1})
\end{eqnarray*}
Replacing in \eqref{ode_ansatz} the pressure term we can write down the initial value problem for $\varphi$ and $v:=\varphi_t$
\begin{equation}\label{ode}
 \partial_t \left(\begin{array}{c} \varphi \\ v \end{array}\right) = \left( \begin{array}{c} v \\ \nabla B(v \circ \varphi^{-1},v \circ \varphi^{-1}) \circ \varphi - [\Delta^{-1} \nabla \partial_2 (\theta_0 \circ \varphi^{-1})]\circ \varphi + \left(\begin{array}{c} 0 \\ \theta_0\end{array}\right) \end{array} \right)
\end{equation}
with initial values $\varphi(0)=\mbox{id}$ and $v(0)=u_0$ and $\theta_0$ is treated as a parameter. At this point it is not clear whether $\operatorname{div}u=0$ is respected. This will be addressed below. First we show 
\begin{Prop}\label{prop_analytic}
The right handside in \eqref{ode} is analytic in the variables $\varphi,v$ and the parameter $\theta_0$. Thus \eqref{ode} is an ODE described by an analytic vector field on $\Ds^s(\R^2) \times H^s(\R^2;\R^2)$ depending analytically on the parameter $\theta_0$.
\end{Prop}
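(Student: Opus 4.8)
The plan is to decompose the right-hand side of \eqref{ode} into elementary pieces and to verify analyticity of each, exploiting that analyticity is preserved under sums, products, compositions, and bounded multilinear maps. The first component $v$ is linear, hence analytic, and the inhomogeneous term $\bigl(\begin{smallmatrix}0\\\theta_0\end{smallmatrix}\bigr)$ is linear in the parameter; so the problem reduces to the two conjugated terms $\nabla B(v\circ\varphi^{-1},v\circ\varphi^{-1})\circ\varphi$ and $[\Delta^{-1}\nabla\partial_2(\theta_0\circ\varphi^{-1})]\circ\varphi$. Writing $R_\varphi w:=w\circ\varphi$ for right translation, each of these has the form $R_\varphi\,A\,R_{\varphi^{-1}}$ applied to $v$ (resp.\ to the parameter $\theta_0$), where $A$ is a fixed Fourier multiplier assembled from $\nabla$, $\chi(D)$, $\Delta^{-1}$ and the second order operators $\partial_i\partial_j$; in the quadratic case $A$ is preceded by the bounded bilinear map $B$ and the diagonal embedding $w\mapsto(w,w)$. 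Since bounded (multi)linear maps and the diagonal embedding are analytic, it suffices to prove that the twisted operator $(\varphi,w)\mapsto R_\varphi A R_{\varphi^{-1}} w$ is analytic on $\Ds^s(\R^2)\times H^s(\R^2)$ for each multiplier occurring.

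First I would record two structural facts about the group $\Ds^s(\R^2)$. Inversion $\varphi\mapsto\varphi^{-1}$ is analytic, by \cite{composition}, and for $\varphi\in\Ds^s(\R^2)$ the differential $d\varphi$ differs from the constant identity matrix by a field in $H^{s-1}(\R^2)$; since $s-1>1=n/2$, this space is a Banach algebra, so $\det d\varphi>0$ depends polynomially, hence analytically, on $\varphi$. The key computation is then the kernel representation of the twisted operator: if $A$ has convolution kernel $k$, the substitution $y=\varphi(z)$ gives
\[
\bigl(R_\varphi A R_{\varphi^{-1}}w\bigr)(x)=\int_{\R^2} k\bigl(\varphi(x)-\varphi(z)\bigr)\,w(z)\,\det d\varphi(z)\,dz ,
\]
so that the entire $\varphi$-dependence enters only through composition with the function $k$ and through multiplication by the analytic factor $\det d\varphi$. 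For the low-frequency block $\nabla\chi(D)\Delta^{-1}\partial_i\partial_j$ the symbol $i\xi_\ell\,\chi(\xi)\xi_i\xi_j/|\xi|^2$ is bounded and supported in $|\xi|\le 1$, so $k$ is a Schwartz function and the map is manifestly analytic, being built from the analytic operations just listed. This is the model case and is essentially the computation of \cite{lagrangian}.

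The main obstacle is the non-smoothing part, namely the multipliers of order up to one with kernels singular on the diagonal: the high-frequency block $\nabla(1-\chi(D))(\cdots)$ appearing in $\nabla B$, and the zeroth-order singular-integral operator $\Delta^{-1}\nabla\partial_2$ acting on $\theta_0$. For these the naive kernel manipulation above does not by itself produce a bounded, let alone analytic, map into $H^s$ without a loss of derivatives. To control this I would use the smoothing and algebraic structure built into \eqref{ode}: in the quadratic term the argument is not $v$ itself but a product $\partial_i(v\circ\varphi^{-1})\,\partial_j(v\circ\varphi^{-1})\in H^{s-1}$, and the factor $\Delta^{-1}$ present in $B$ supplies exactly the gain of derivatives that compensates the order of $\nabla$, so that $\nabla B(\cdot,\cdot)$ maps into $H^s$; while $\Delta^{-1}\nabla\partial_2$ is of order $0$ and maps $H^s\to H^s$ boundedly as a Calderón--Zygmund-type operator. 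After this bookkeeping each twisted operator maps $H^s\to H^s$ with no loss, and its analyticity again follows from the kernel/change-of-variables representation together with the symbolic calculus established in \cite{lagrangian}. I expect the genuinely technical point to be the uniform estimates on the conjugated singular kernels — that the tail of the Taylor expansion of $\varphi\mapsto R_\varphi A R_{\varphi^{-1}}$ is given by bounded multilinear maps with summable operator norms — whereas the bilinear and parameter dependence are then immediate.
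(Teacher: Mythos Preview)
Your overall strategy---reduce to showing that each conjugated multiplier $(\varphi,w)\mapsto R_\varphi A R_{\varphi^{-1}} w$ is analytic, and handle this via the kernel representation after the change of variables $y=\varphi(z)$---is sound and is essentially what \cite{lagrangian} and \cite{sqg} carry out. One factual error should be removed, however: inversion $\varphi\mapsto\varphi^{-1}$ on $\Ds^s(\R^2)$ is \emph{not} analytic, nor even $C^1$; the reference \cite{composition} establishes only continuity. This failure is precisely why one cannot decompose $R_\varphi A R_{\varphi^{-1}}$ as a composition of individually analytic maps and must instead treat the conjugated operator as a single object. Fortunately your actual argument does not depend on the false claim, since the substitution $y=\varphi(z)$ eliminates $\varphi^{-1}$ from the kernel integral.

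For the $\theta_0$ term the paper takes a shorter route than yours. Rather than repeat the singular-kernel analysis for the order-zero multiplier $\Delta^{-1}\nabla\partial_2$, it uses the algebraic factorization $-\Delta^{-1}\partial_k\partial_2=\mathcal R_k\mathcal R_2$ into Riesz transforms and writes
\[
\bigl(-\Delta^{-1}\partial_k\partial_2(\theta_0\circ\varphi^{-1})\bigr)\circ\varphi
=\Bigl[\mathcal R_k\Bigl(\bigl((\mathcal R_2(\theta_0\circ\varphi^{-1}))\circ\varphi\bigr)\circ\varphi^{-1}\Bigr)\Bigr]\circ\varphi,
\]
that is, as the composition of two conjugated Riesz transforms $(\varphi,f)\mapsto(\mathcal R_j(f\circ\varphi^{-1}))\circ\varphi$, each already shown to be analytic in \cite{sqg}. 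This reduces the new content to a one-line identity and avoids re-establishing the uniform tail estimates you correctly identify as the technical crux. Your direct approach would also succeed, but it amounts to reproving a special case of \cite{sqg}; the Riesz factorization is the more economical argument.
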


\begin{proof}
The analyticity of 
\[
 (\varphi,v) \mapsto \nabla B(v \circ \varphi^{-1},v \circ \varphi^{-1}) \circ \varphi 
\]
was established in \cite{lagrangian}. In \cite{sqg} the analyticity of
\[
 (\varphi,\theta_0) \mapsto (\mathcal R_k (\theta_0 \circ \varphi^{-1}) )\circ \varphi
\]
was established where $\mathcal R_k=\frac{\partial_k}{(-\Delta)^{1/2}}, k=1,2$ is the Riesz operator. By writing for $k=1,2$
\[
 (-\Delta^{-1} \partial_k \partial_2 (\theta_0 \circ \varphi^{-1})) \circ \varphi = [\mathcal R_k \left( \left((\mathcal R_2 (\theta_0 \circ \varphi^{-1}))\circ \varphi \right) \circ \varphi^{-1} \right)] \circ \varphi 
\]
we see that
\[
 (\varphi,\theta_0) \mapsto (-\Delta^{-1} \nabla \partial_2 (\theta_0 \circ \varphi^{-1})) \circ \varphi
\]
is analytic. Alternatively one can use the splitting $\chi(D)+(1-\chi(D))$ to show this. Altogether we see that \eqref{ode} is analytic in $(\varphi,v,\theta_0)$.
\end{proof}

By ODE theory (see e.g. \cite{lang}) we get for every initial value $\varphi(0)=\mbox{id}, v(0)=u_0$ and $\theta_0$ a solution $(\varphi,v)$ on some time interval $[0,T]$. Defining $u:=\varphi_t \circ \varphi^{-1}$ we see that $u$ solves
\[
 u_t + (u \cdot \nabla) u = \nabla B(u,u) - \Delta^{-1} \nabla \partial_2(\theta_0 \circ \varphi^{-1}) + \left( \begin{array}{c} 0 \\ \theta_0 \circ \varphi^{-1} \end{array}\right)
\]
Taking the divergence we see that the $\theta_0$ terms vanish and we are left with
\[
 \partial_t \operatorname{div} u = \chi(D) \left(2 (u \cdot \nabla) \operatorname{div} u + (\operatorname{div}u)^2 \right) - (u \cdot \nabla) \operatorname{div} u
\]
By doing the same calculations as in \cite{lagrangian} we can estimate
\[
 \partial_t ||\operatorname{div} u||_{L^2}^2 \leq C ||\operatorname{div}u||_{L^2}^2
\]
showing that $\operatorname{div}u=0$ is preserved if it is initially zero. Hence for $u_0 \in H_\sigma^s(\R^2;\R^2)$, the space of divergence-free $H^s$ vector fields, and $\theta_0 \in H^s(\R^2)$ we get a solution $\varphi$ to \eqref{ode} on some time interval $[0,T]$. By the considerations above we see that
\[
 u(t):=\varphi_t(t) \circ \varphi(t)^{-1}, \quad \theta(t):=\theta_0 \circ \varphi(t)^{-1},\quad t \in [0,T]
\]
solves the first equation in \eqref{boussinesq}. Furthermore $\operatorname{div}u(t)=0$ is preserved and we have trivially the second equation in \eqref{boussinesq}. By the local wellposedness of ODEs we thus have (see \cite{lagrangian} how to prove the uniqueness part)
\begin{Th}\label{th_lwp}
Let $s > 2$. The inviscid Boussinesq system \eqref{boussinesq} is locally wellposed in $H^s$.
\end{Th}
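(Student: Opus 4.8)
The plan is to assemble the ingredients already in place into a standard ODE argument on a Banach manifold. Proposition \ref{prop_analytic} provides an analytic — hence smooth and locally Lipschitz — vector field on $\Ds^s(\R^2) \times H^s(\R^2;\R^2)$ depending analytically on the parameter $\theta_0 \in H^s(\R^2)$. First I would invoke the Cauchy--Lipschitz/Picard existence theorem for ODEs on Banach spaces (as in \cite{lang}) to obtain, for each datum $(\id, u_0)$ and each $\theta_0$, a unique maximal solution $t \mapsto (\varphi(t), v(t))$ of \eqref{ode} on some interval $[0,T]$, depending analytically on $t$ and on the data $(u_0, \theta_0)$. This settles existence of the Lagrangian trajectory together with its regularity in one stroke, since $\varphi - \id \in C^1([0,T], H^s)$ is automatic from the ODE.

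Next I would pass back to Eulerian variables. Restricting $u_0$ to the closed subspace $H_\sigma^s(\R^2;\R^2)$ and applying Gr\"onwall's inequality to the estimate $\partial_t \|\operatorname{div} u\|_{L^2}^2 \leq C \|\operatorname{div} u\|_{L^2}^2$ derived above shows that $\operatorname{div} u = 0$ is propagated. Setting $u(t) = \varphi_t(t) \circ \varphi(t)^{-1}$ and $\theta(t) = \theta_0 \circ \varphi(t)^{-1}$ then yields curves in $C([0,T], H^s)$, because $\Ds^s(\R^2)$ is a topological group under composition and inversion; the computation preceding the statement shows these solve \eqref{boussinesq}. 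For continuous dependence, I would note that the flow of the analytic ODE is continuous in the initial condition and in the parameter, while the reconstruction maps $(\varphi, v) \mapsto v \circ \varphi^{-1}$ and $(\varphi, \theta_0) \mapsto \theta_0 \circ \varphi^{-1}$ are continuous on $\Ds^s(\R^2)$; hence the composed solution map $\Phi_T$ is continuous, giving well-posedness in the sense of existence and continuous dependence.

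The step I expect to be the main obstacle is uniqueness, since a priori an arbitrary $H^s$ Eulerian solution of \eqref{boussinesq} need not arise from a solution of \eqref{ode}. Given such a solution $(u, \theta)$, I would define its flow $\varphi$ by $\varphi_t = u \circ \varphi$, $\varphi(0) = \id$, and verify that $\varphi - \id \in C^1([0,T], H^s)$ with $\det(d_x \varphi) > 0$, so that $\varphi$ stays in $\Ds^s(\R^2)$; one must then check that $(\varphi, \varphi_t)$ satisfies \eqref{ode} with parameter $\theta(0)$, after which ODE uniqueness forces $\varphi$, and therefore $(u,\theta)$, to coincide with the solution constructed above. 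The delicate point is exactly that this Eulerian-to-Lagrangian passage loses no derivatives: one needs that an $H^s$ divergence-free vector field generates a flow in $\Ds^s(\R^2)$ of the correct time regularity and that the transport identity $\theta = \theta_0 \circ \varphi^{-1}$ holds in $H^s$. For this last verification I would follow the argument of \cite{lagrangian}.
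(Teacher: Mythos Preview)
Your proposal is correct and follows essentially the same route as the paper: invoke Proposition~\ref{prop_analytic} together with the Cauchy--Lipschitz theorem on Banach manifolds to solve \eqref{ode}, use the $L^2$ estimate and Gr\"onwall to propagate $\operatorname{div}u=0$, reconstruct $(u,\theta)$ via composition with $\varphi^{-1}$, and defer the Eulerian-to-Lagrangian uniqueness step to \cite{lagrangian}. If anything, your outline is more explicit than the paper's own discussion, which treats the theorem as an immediate consequence of the preceding computations.
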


\section{Nonuniform dependence}\label{section_nonuniform}

In this section we prove the main theorem. We will argue in the line of \cite{sqg}. Note that the system \eqref{boussinesq} has the scaling $u_\lambda=\lambda u(\lambda t,x),\theta_\lambda=\lambda^2 \theta(\lambda t,x)$, i.e. for $u,\theta$ a solution $u_\lambda,\theta_\lambda$ is also a solution. We denote by $\Phi$ the time 1 solution map and $U \subset H_\sigma^s(\R^2;\R^2) \times H^s(\R^2)$ its domain of definition, i.e. for $(u_0,\theta_0) \in U$
\[
 \Phi(u_0,\theta_0)=\left(\begin{array}{c} \Phi^{(1)}(u_0,\theta_0) \\ \Phi^{(2)}(u_0,\theta_0) \end{array}\right) := \left( \begin{array}{c} u(1) \\ \theta(1) \end{array}\right)
\] 
By the scaling property we see that the time $T$ solution map is given by
\[
 \Phi_T(u_0,\theta_0)=\left( \begin{array}{c} \Phi_T^{(1)}(u_0,\theta_0) \\ \Phi_T^{(2)}(u_0,\theta_0) \end{array}\right) = \left(\begin{array}{c} \frac{1}{T} \Phi^{(1)}(Tu_0,T^2\theta_0) \\ \frac{1}{T^2} \Phi^{(2)}(Tu_0,T^2 \theta_0) \end{array}\right)
\]
Thus Theorem \ref{th_nonuniform} will follow from
\begin{Prop}\label{prop_nonuniform}
The map $\Phi$ is nowhere locally uniformly continuous on $U$.
\end{Prop}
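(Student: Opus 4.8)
The plan is to prove that $\Phi$ is nowhere locally uniformly continuous by constructing, around any given point $(u_0,\theta_0) \in U$, two sequences of initial data that converge to each other but whose images under $\Phi$ stay bounded apart. The strategy I would follow is the standard one from \cite{sqg}: exploit the transport formula $\theta(1) = \theta_0 \circ \varphi(1)^{-1}$ together with the analytic dependence of the flow on the initial data (Proposition \ref{prop_analytic}) to engineer high-frequency perturbations whose flow maps differ by a fixed amount in $H^s$ while the perturbations themselves become arbitrarily small.

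Concretely, I would proceed as follows. First, fix $(u_0,\theta_0) \in U$ and let $\varphi$ be the associated flow at time $1$. I would then introduce perturbations of the initial temperature of the form $\theta_0 + \psi_n$, where $\psi_n$ is a sequence of localized high-frequency bumps. The key point is that the evolution of $\varphi$ depends on $\theta_0$ only through the analytic vector field in \eqref{ode}, so by continuous dependence the flow $\varphi_n(1)$ corresponding to the perturbed data stays close to $\varphi(1)$ when the perturbation is small in $H^s$. Meanwhile, the transported temperature $\Phi^{(2)} = (\theta_0 + \psi_n) \circ \varphi_n(1)^{-1}$ picks up the oscillatory part $\psi_n \circ \varphi_n(1)^{-1}$. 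I would choose two sequences of perturbations, say with phases shifted so that $\psi_n$ and $\psi_n'$ converge to each other in $H^s$ as $n \to \infty$, yet the difference of their transports $\psi_n \circ \varphi_n(1)^{-1} - \psi_n' \circ \varphi_n'(1)^{-1}$ retains a fixed lower bound in the $H^s$ norm. This is precisely where the failure of uniform continuity is localized: the composition with a diffeomorphism, acting on high frequencies, amplifies a vanishingly small difference in the data into an order-one difference in the output.

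The main obstacle I anticipate is controlling the interaction between the high-frequency perturbation and the flow map simultaneously. One must verify that the perturbed flow maps $\varphi_n(1)$ and $\varphi_n'(1)$ remain sufficiently close that the difference in outputs is genuinely carried by the oscillatory terms rather than washed out or cancelled by discrepancies in the diffeomorphisms. The natural quantitative tool is the estimate, following \cite{sqg}, that for a fixed diffeomorphism $\varphi \in \Ds^s(\R^2)$ and a high-frequency function $\psi_n$ concentrated at frequency $\sim n$, the norm $\| \psi_n \circ \varphi^{-1} \|_{H^s}$ is comparable to $\| \psi_n \|_{H^s}$ up to a constant depending only on $\varphi$; combined with a careful choice of the phase shift, this produces the lower bound on the output difference. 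The delicate bookkeeping lies in making the perturbations small enough in $H^s$ to establish closeness of the data and proximity of the flows, while large enough in the appropriate sense that the transported bumps do not cancel.

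Finally, to close the argument I would combine these estimates into the two sequences $(u_0, \theta_0 + \psi_n)$ and $(u_0, \theta_0 + \psi_n')$, both lying in any prescribed neighborhood of $(u_0,\theta_0)$ for $n$ large, verify via the continuous (indeed analytic) dependence of \eqref{ode} on the parameter $\theta_0$ that the distance between the data tends to zero, and then exhibit the fixed positive gap between the images. Since the base point $(u_0,\theta_0)$ was arbitrary, this establishes that no neighborhood admits a modulus of continuity, giving the nowhere local uniform continuity of $\Phi$ and hence, via the scaling relation, of $\Phi_T$ for every $T>0$.
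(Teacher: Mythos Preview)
Your proposal has a genuine gap: the mechanism you describe is not the one that drives the result, and the construction you sketch is unlikely to close.

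You propose two sequences $(u_0,\theta_0+\psi_n)$ and $(u_0,\theta_0+\psi_n')$ sharing the \emph{same} velocity component, with temperature perturbations $\psi_n,\psi_n'$ converging to each other in $H^s$. Since $\Psi$ depends analytically (hence locally Lipschitz) on the data, the associated flow maps obey $\|\varphi_n-\varphi_n'\|_s \lesssim \|\psi_n-\psi_n'\|_s \to 0$. You then need the transported temperatures to remain bounded apart, but the only source of discrepancy between the two flows is the vanishing quantity $\psi_n-\psi_n'$, and you give no argument producing a controlled, nonvanishing displacement of the flow on the support of the bump. Your appeal to ``composition with a diffeomorphism amplifies high-frequency differences'' is misplaced: for a \emph{fixed} diffeomorphism the map $\theta \mapsto \theta\circ\varphi^{-1}$ is bounded linear and amplifies nothing; the non-uniform continuity of composition lives in the $\varphi$-variable, which in your setup is slaved to the small temperature difference.

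The paper's construction is essentially the reverse of yours. The two sequences carry the \emph{same} temperature bump $\theta^{(n)}$ (of fixed norm $\|\theta^{(n)}\|_s=R/2$, with support shrinking to a point $x^\ast$ far from $\operatorname{supp}\theta_0$) but differ in the \emph{velocity} component by $\frac{1}{n}u^\ast$. The direction $w^\ast=(u^\ast,0)$ and the point $x^\ast$ are produced by Lemma~\ref{lemma_dense} so that $(d_{w_0}\Psi(w^\ast))(x^\ast)\neq 0$; a second-order Taylor expansion of $\Psi$ then yields $|\tilde\varphi^{(n)}(x^\ast)-\varphi^{(n)}(x^\ast)| \geq \frac{m}{2n}\|w^\ast\|_s$. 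Choosing the support radius $r_n$ of $\theta^{(n)}$ smaller than this displacement (divided by the Lipschitz constant $L$) forces the two transported bumps $\theta^{(n)}\circ(\varphi^{(n)})^{-1}$ and $\theta^{(n)}\circ(\tilde\varphi^{(n)})^{-1}$ to have \emph{disjoint} supports, so their $H^s$-difference is bounded below by a fixed multiple of $\|\theta^{(n)}\|_s \sim R$. There is no phase-shift or frequency-cancellation argument; the lower bound comes purely from support separation driven by a velocity perturbation. Your sketch is missing this entire mechanism---the velocity perturbation, the use of the differential of $\Psi$ at a distinguished point via Lemma~\ref{lemma_dense}, and the shrinking-support bump calibrated to the displacement scale---and what you propose in its place does not supply a working substitute.
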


For \eqref{ode} with initial values $\varphi(0)=\mbox{id}$, $v(0)=u_0$ and $\theta_0$ we denote for $t \geq 0$
\[
 \Psi^t(u_0,\theta_0)=\varphi(t)
\]
i.e. the time $t$ value of the $\varphi$-component. Further we introduce $\Psi \equiv \Psi^1$ for simplicity. Note that $\Psi^t(u_0,\theta_0)$ is analytic in its arguments. We have by the initial condition
\[
 \left. \partial_t \right|_{t=0} \Psi^t(u_0,\theta_0) = u_0
\]
By the scaling property of \eqref{boussinesq} we have the identity
\[
 \Psi^t(u_0,\theta_0) = \Psi(t u_0,t^2 \theta_0)
\]
So taking the $t$-derivative at $t=0$ we get
\[
 d_{(0,0)} \Psi (u_0,0) = u_0
\]
which is the differential of $\Psi$ at the point $(0,0) \in H^s(\R^2;\R^2) \times H^s(\R^2)$ in direction of $(u_0,0)$. With this we have

\begin{Lemma}\label{lemma_dense}
Let $U \subseteq H_\sigma^s(\R^2;\R^2) \times H^s(\R^2)$ be the domain of definition of the $\Psi$ (resp. $\Phi$). There is subset $S \subseteq U$ which is dense in $U$, the elements of which are of the form $(u_0,\theta_0)$ with $\theta_0$ compactly supported and it has the property
\begin{align*}
& \forall w_0 \in S \quad \exists w=(u,0) \in H_\sigma^s(\R^2;\R^2) \times H^s(\R^2), x^\ast \in \R^2 \mbox{ with } (d_{w_0} \Psi (w))(x^\ast) \neq 0\\
& \mbox{and where the distance of } x^\ast \mbox{ to the support of } \theta_0 \mbox{ is bigger than } 2.
\end{align*}
\end{Lemma}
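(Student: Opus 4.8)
The plan is to take $S$ to be the set of all $(u_0,\theta_0)\in U$ with $\theta_0$ compactly supported for which the stated property holds, and to prove that this $S$ is dense in $U$. Since $C_c^\infty(\R^2)$ is dense in $H^s(\R^2)$ and $U$ is open, the points of $U$ with compactly supported $\theta_0$ are already dense (fix $u_0$ and approximate $\theta_0$); so it suffices to show that, given any such $w_0=(u_0,\theta_0)\in U$ with $\operatorname{supp}(\theta_0)\subseteq B_R$ and any $\varepsilon>0$, there is a point of $S$ within distance $\varepsilon$ of $w_0$.

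The device I would use is the scaling curve $\gamma(\tau):=(\tau u_0,\tau^2\theta_0)$, $\tau\in[0,1]$. Since $w_0\in U$ means the time-$1$ solution exists, the time-$\tau$ solution exists for every $\tau\in[0,1]$, so by the scaling identity $\Psi^\tau(u_0,\theta_0)=\Psi(\tau u_0,\tau^2\theta_0)$ the whole curve $\gamma$ lies in $U$; it joins the origin $\gamma(0)=(0,0)$ to $\gamma(1)=w_0$, and its temperature component $\tau^2\theta_0$ stays supported in $B_R$ throughout. Now fix once and for all a point $x^\ast$ with $|x^\ast|>R+2$, so that $\operatorname{dist}\bigl(x^\ast,\operatorname{supp}(\tau^2\theta_0)\bigr)>2$ for every $\tau$, and a divergence-free direction $w=(u,0)$ with $u=\nabla^\perp\psi$ for some $\psi\in C_c^\infty(\R^2)$ chosen so that $u(x^\ast)\neq 0$.

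Then I would consider the $\R^2$-valued function $f(\tau):=\bigl(d_{\gamma(\tau)}\Psi(w)\bigr)(x^\ast)$. Because $\Psi$ is analytic on $U$ its differential $w_0\mapsto d_{w_0}\Psi(w)$ is analytic, $\gamma$ is polynomial in $\tau$, and point evaluation at $x^\ast$ is a bounded linear map on $H^s(\R^2;\R^2)$ (here $s>2>1$ gives $H^s\hookrightarrow C^0$); hence $f$ is real-analytic on $[0,1]$. At the left endpoint the differential-at-the-origin formula $d_{(0,0)}\Psi(u,0)=u$ gives $f(0)=u(x^\ast)\neq 0$, so $f\not\equiv 0$ (equivalently $|f|^2$ is real-analytic and not identically zero), and its zero set in $[0,1]$ is finite. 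Consequently $f(\tau)\neq 0$ for all $\tau$ in a punctured left-neighborhood of $1$, and for such $\tau$ the point $\gamma(\tau)$ lies in $S$: its $\theta$-component is compactly supported, $x^\ast$ is at distance $>2$ from that support, and $d_{\gamma(\tau)}\Psi(w)(x^\ast)=f(\tau)\neq0$. Since $\gamma(\tau)\to w_0$ as $\tau\to1$, this produces points of $S$ arbitrarily close to $w_0$, which proves density.

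The routine parts are the Sobolev embedding, the explicit bump construction of $u$, and the density of compactly supported $\theta_0$. The step I regard as the crux is the verification that the scaling curve stays inside $U$ with a \emph{fixed} temperature support: this is what lets me use a single analytic function $f$ and a single far-away point $x^\ast$ valid along the entire curve, and it is exactly the mechanism that transports the easily computed non-vanishing at the origin, $f(0)=u(x^\ast)\neq 0$, to non-vanishing near the arbitrary target $w_0$. Note that the argument only yields density rather than "every such $w_0$ is good", because $f(1)$ itself may vanish; this is consistent with the statement of the lemma.
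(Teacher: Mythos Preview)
Your argument is correct and follows essentially the same route as the paper: pick a point $x^\ast$ far from $\operatorname{supp}\theta_0$ and a divergence-free $u$ with $u(x^\ast)\neq 0$, connect $w_0$ to the origin by an analytic curve inside $U$, use $d_{(0,0)}\Psi(u,0)=u$ to get non-vanishing at the left endpoint, and invoke analyticity to find good points arbitrarily close to $w_0$. The only cosmetic difference is that the paper takes the linear segment $t\mapsto t w_0=(tu_0,t\theta_0)$, whereas you use the scaling curve $\tau\mapsto(\tau u_0,\tau^2\theta_0)$; your choice has the mild advantage that membership in $U$ along the entire curve is immediate from the scaling identity $\Psi^\tau(u_0,\theta_0)=\Psi(\tau u_0,\tau^2\theta_0)$, a point the paper leaves implicit.
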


\begin{proof}
We know that that compactly supported functions are dense in $H^s(\R^2)$. Take an arbitrary $w_0=(u_0,\theta_0) \in U$ with a compactly supported $\theta_0$ and $u \in H_\sigma^s(\R^2;\R^2)$ with $u(x^\ast) \neq 0$ for some $x^\ast \in \R^2$ with a distance of more than 2 to the support of $\theta_0$. Consider now
\[
 \gamma:t \mapsto \left(d_{t w_0} \Psi(u,0)\right)(x^\ast)
\]
which is an analytic map with $\gamma(0)=u(x^\ast) \neq 0$. Therefore we have a sequence $t_n \uparrow 1$ with $\gamma(t_n) \neq 0$. Hence we can put all the $t_n w_0$ into $S$. This shows the claim. 
\end{proof}

With this preparations we can now prove Proposition \ref{prop_nonuniform}.

\begin{proof}[Proof of Proposition \ref{prop_nonuniform}]
It will be enough to prove that for any $w_0 \in S$ from Lemma \ref{lemma_dense} and any $R > 0$ we have
\[
 \left. \Phi \right|_{B_R(w_0)} : w_0 \mapsto \Phi(w_0)
\]
is not uniformly continuous. Here $B_R(w_0)$ is the ball of radius $R$ with center $w_0=(u_0,\theta_0)$, i.e.
\[
 B_R(w_0)=\{ w=(u,\theta) \in H^s(\R^2;\R^2) \times H^s(\R^2) \;|\; ||w||_s < R \}
\]
where $||w||_s:=\max\{||u-u_0||_s,||\theta - \theta_0||_s\}$. So we take an arbitrary $w_0=(u_0,\theta_0) \in S$. Denote by $\varphi_\bullet = \Psi(w_0)$. By Lemma \ref{lemma_dense} there is $w^\ast=(u^\ast,0) \in H^s_\sigma(\R^2;\R^2) \times H^s(\R^2)$ and $x^\ast \in \R^2$ with
\[
 |\left(d_{w_0}\exp(w^\ast)\right)(x^\ast)| > m ||w^\ast||_s 
\] 
for some $m > 0$. By continuity we can choose $R_1 > 0$ with
\begin{equation}\label{below}
 \frac{1}{C_1} ||\theta||_s \leq ||\theta \circ \varphi^{-1}||_s \leq C_1 ||\theta||_s
\end{equation}
for some $C_1 > 0$ and for all $\theta \in H^s(\R^2)$ and for all $\varphi \in \Psi\left(B_{R_1}(w_0)\right)$ -- see \cite{composition} for the justification. By the choice of $x^\ast$ we have for $w_0=(u_0,\theta_0)$
\[
 d:=\operatorname{dist}\left(\varphi_\bullet(\operatorname{supp}\theta_0),\varphi_\bullet(B_1(x^\ast))\right) > 0
\]
where $\operatorname{dist}$ denotes the distance function for sets in $\R^2$, $\operatorname{supp}$ denotes the support of functions and $B_1(x^\ast)$ is the unit ball in $\R^2$. Further we introduce
\[
 K_1:=\{ x \in \R^2 \;|\; \operatorname{dist}\left(x,\varphi_\bullet(\operatorname{supp}\theta_0)\right) \leq d/4 \}
\]
and
\[
 K_2:=\{ x \in \R^2 \;|\; \operatorname{dist}\left(x,\varphi_\bullet(B_1(x^\ast))\right) \leq d/4 \}
\]
With this we choose $0 < R_2 \leq R_1$ with
\begin{equation}\label{lipschitz}
 |\varphi(x)-\varphi_\bullet(x)| \leq d/4 \mbox{ and } |\varphi(x)-\varphi(y)| < L |x-y|
\end{equation}
for some $L > 0$ and for all $x,y \in \R^2$ and for all $\varphi \in \Psi\left(B_{R_2}(w_0)\right)$. This is possible by the Sobolev imbedding of $H^s$ into $C^1$, i.e
\[
 ||f||_{C^1} \leq C ||f||_s,\quad \forall f \in H^s(\R^2)
\]
for some $C > 0$. This ensures in particular that
\[
 \varphi(\operatorname{supp}\theta_0) \subseteq K_1 \quad \mbox{and} \quad
 \varphi(B_1(x^\ast)) \subseteq K_2
\]
for all $\varphi \in \Psi(B_{R_2}(w_0))$. To motivate the next step consider the Taylor expansion of $\Psi$
\[
 \Psi(w_0+h)=\Psi(w_0)+d_{w_0}\Psi(h) + \int_0^1 (1-t) d_{w_0 + t h}^2 \Psi(h,h) \;dt
\]
where $d_{w}^2 \Psi$ is the second derivative. With this we choose $0 < R_\ast \leq R_2$ in such a way that we have
\[
 ||d_w^2 \Psi(h_1,h_2)||_s \leq K ||h_1||_s ||h_2||_s
\]
and
\[
 ||d_{w_1}^2\Psi(h_1,h_2)-d_{w_2}^2\Psi(h_1,h_2)||_s \leq K ||w_1-w_2||_s ||h_1||_s ||h_2||_s
\]
for all $w,w_1,w_2 \in B_{R_\ast}(w_0)$ and $h_1,h_2 \in H^s(\R^2;\R^2) \times H^s(\R^2)$. Finally we choose $0 < R_\ast \leq R_2$ with
\begin{equation}\label{R_choice}
 \max\{C K R_\ast,C K R_\ast^2\} < m/8
\end{equation}
with the $C$ from the Sobolev imbedding. The goal is now to construct for every $0 < R \leq R_\ast$ a pair of sequences of initial values
\[
 (w_0^{(n)})_{n \geq 1},(\tilde w_0^{(n)})_{n \geq 1} \subseteq B_R(w_0)
\]
with $\lim_{n \to \infty} ||w_0^{(n)}-\tilde w_0^{(n)}||_s=0$ whereas
\[
 \limsup_{n \to \infty} ||\Phi(w_0^{(n)})-\Phi(\tilde w_0^{(n)})||_s > 0
\]
which would show our claim. So fix now $0 < R \leq R_\ast$. The first sequence we choose as
\[
 w_0^{(n)}=w_0 + \left(\begin{array}{c} 0 \\ \theta^{(n)} \end{array}\right) 
\]
with some $\theta^{(n)} \in H^s(\R^2)$ with $||\theta^{(n)}||_s = R/2$ and $\operatorname{supp}\theta^{(n)} \subseteq B_{r_n}(x^\ast)$ where
\[
 r_n=\frac{m}{8nL} ||w^\ast||_s
\]
Thus the support of $\theta^{(n)}$ vanishes for $n \to \infty$, but its ''mass'' remmains constant. The second sequence we get by a perturbation of the first
\[ 
 \tilde w_0^{(n)} = w_0^{(n)} + \frac{1}{n} w^\ast = w_0 + \left(\begin{array}{c} 0 \\ \theta^{(n)} \end{array}\right) + \frac{1}{n} \left(\begin{array}{c} u^\ast \\ 0 \end{array}\right) 
\]
We clearly have for $n \geq N$ with $N$ large enough
\[
 (w_0^{(n)})_{n \geq 1},(\tilde w_0^{(n)})_{n \geq 1} \subseteq B_R(w_0) \mbox{ and } \operatorname{supp} \theta^{(n)} \subseteq B_1(x^\ast)
\]
Further by construction $||w_0^{(n)}-\tilde w_0^{(n)}||_s=||\frac{1}{n}w^\ast||_s \to 0$ as $n \to \infty$. It will be enough to prove 
\[
 \limsup_{n \to \infty} ||\Phi^{(2)}(w_0^{(n)})-\Phi^{(2)}(\tilde w_0^{(n)})||_s > 0
\]
for $\Phi^{(2)}$, the $\theta$-component of $\Phi$. We introduce for $n \geq N$
\[
 \varphi^{(n)}=\Psi(w_0^{(n)}) \quad \mbox{and} \quad \tilde \varphi^{(n)}=\Psi(\tilde w_0^{(n)})
\]
We thus have
\[
 \Phi^{(2)}(w_0^{(n)})=(\theta_0+\theta^{(n)}) \circ (\varphi^{(n)})^{-1} \quad \mbox{and}
\quad \Phi^{(2)}(\tilde w_0^{(n)})=(\theta_0+\theta^{(n)}) \circ (\tilde \varphi^{(n)})^{-1}
\]
So we consider
\begin{multline*}
 ||(\theta_0+\theta^{(n)}) \circ (\varphi^{(n)})^{-1}-(\theta_0+\theta^{(n)}) \circ (\tilde \varphi^{(n)})^{-1}||_s\\
=||\left(\theta_0 \circ (\varphi^{(n)})^{-1} - \theta_0 \circ (\tilde \varphi^{(n)})^{-1}\right) + \left(\theta^{(n)} \circ (\varphi^{(n)})^{-1}-\theta^{(n)} \circ (\tilde \varphi^{(n)})^{-1}\right)||_s
\end{multline*}
Note that the two terms in the first bracket are supported in $K_1$ and the latter two terms in $K_2$. Thus we can seperate the expressions (see \cite{sqg} for the details on this) and it suffices to establish
\[
 \limsup_{n \to \infty} ||\theta^{(n)} \circ (\varphi^{(n)})^{-1}-\theta^{(n)} \circ (\tilde \varphi^{(n)})^{-1}||_s > 0
\]
Note that the terms are supported in
\[
 \overline{\varphi^{(n)}(B_{r_n}(x^\ast))} \mbox{ resp. } \overline{\tilde \varphi^{(n)}(B_{r_n}(x^\ast))}
\]
To show that we can also seperate these expressions we estimate the difference $\varphi^{(n)}-\tilde \varphi^{(n)}$. To do this we use the Taylor expansion. With $h^{(n)}:=(0,\theta^{(n)})$ it reads as
\begin{multline*}
 \varphi^{(n)} = \Psi(w_0^{(n)}) = \Psi(w_0+h^{(n)}) = \\
\Psi(w_0) + d_{w_0}\Psi(h^{(n)}) + \int_0^1 (1-t) d_{w_0 + t h^{(n)}}\Psi(h^{(n)},h^{(n)}) \;dt
\end{multline*}
resp.
\begin{multline*}
 \varphi^{(n)} = \Psi(\tilde w_0^{(n)}) = \Psi(w_0+h^{(n)}+\frac{1}{n}w^\ast) = \\
\Psi(w_0) + d_{w_0}\Psi(h^{(n)}+\frac{1}{n}w^\ast) + \int_0^1 (1-t) d_{w_0 + t h^{(n)}+t \frac{1}{n}w^\ast}\Psi(h^{(n)}+\frac{1}{n}w^\ast,h^{(n)}+\frac{1}{n}w^\ast) \;dt
\end{multline*}
Taking the difference we get
\[
 \tilde \varphi^{(n)} - \varphi^{(n)} = d_{w_0}\Psi(\frac{1}{n} w^\ast) + I_1 + I_2 + I_3
\]
where 
\[
 I_1 = \int_0^1 (1-t) \left(d_{w_0 + t h^{(n)}+t \frac{1}{n}w^\ast}\Psi(h^{(n)},h^{(n)})-d_{w_0 + t h^{(n)}}\Psi(h^{(n)},h^{(n)}) \right) \;dt
\]
and
\[
 I_2 = 2 \int_0^1 (1-t) d_{w_0 + t h^{(n)}+t \frac{1}{n}w^\ast}\Psi(h^{(n)},\frac{1}{n}w^\ast)
\]
and
\[
 I_3 = \int_0^1 (1-t) d_{w_0 + t h^{(n)}+t \frac{1}{n}w^\ast}\Psi(\frac{1}{n}w^\ast,\frac{1}{n}w^\ast)
\]
We have the estimates
\[
 ||I_1||_s \leq K ||\frac{1}{n}w^\ast|| ||h^{(n)}||_s^2 = \frac{K R^2}{4n} ||w^\ast||_s
\]
and
\[
 ||I_2||_s \leq 2 K ||h^{(n)}||_s ||\frac{1}{n}w^\ast||_s = \frac{K R}{n} ||w^\ast||_s
\]
and 
\[
 ||I_3||_s \leq K ||\frac{1}{n}w^\ast||_s^2 = \frac{K}{n^2} ||w^\ast||_s
\]
Thus by the choice of $R_ast$ with \eqref{R_choice} and the Sobolev imbedding
\[
 |I_1(x^\ast)| \leq \frac{C K R^2}{4n} ||w^\ast||_s \leq \frac{m}{8n} ||w^\ast||_s
\]
and
\[
 |I_2(x^\ast)| \leq \frac{C K R}{n} ||w^\ast||_s \leq \frac{m}{8n} ||w^\ast||_s
\]
and
\[
 |I_3(x^\ast)| \leq \frac{C K}{n^2} ||w^\ast||_s \leq \frac{m}{8n} ||w^\ast||_s
\]
where the last inequality holds for $n$ large enough. We thus have
\[
 |\tilde \varphi^{(n)}(x^\ast)-\varphi^{(n)}(x^\ast)| \geq |d_{w_0}\Psi(\frac{1}{n}w^\ast)(x^\ast)|-3 \frac{m}{8n} ||w^\ast||_s \geq \frac{m}{2n} ||w^\ast||_s
\]
By the Lipschitz property \eqref{lipschitz} and the choice of $r_n$ 
\[
 \overline{\varphi^{(n)}(B_{r_n}(x^\ast))} \subseteq B_{\frac{m}{8n} ||w^\ast||_s}(\varphi^{(n)}(x^\ast))
\]
resp
\[
 \overline{\tilde \varphi^{(n)}(B_{r_n}(x^\ast))} \subseteq B_{\frac{m}{8n} ||w^\ast||_s}(\tilde \varphi^{(n)}(x^\ast))
\]
Thus they are supported in balls of radius $\frac{m}{8n} ||w^\ast||_s$ whose centers are more than $\frac{m}{2n} ||w^\ast||_s$ apart. So we have a situation described in \cite{sqg} and thus we can write with some $C' > 0$
\begin{multline*}
 ||\theta^{(n)} \circ (\varphi^{(n)})^{-1}-\theta^{(n)} \circ (\tilde \varphi^{(n)})^{-1}||_s \geq  \\
C' (||\theta^{(n)} \circ (\varphi^{(n)})^{-1}||_s + ||\theta^{(n)} \circ (\tilde \varphi^{(n)})^{-1}||_s) \geq \frac{2C'}{C_1} ||\theta^{(n)}||_s =\frac{C'}{C_1} R
\end{multline*}
where we used \eqref{below}. This shows that we have $\tilde C$ with
\[
 \limsup_{n \to \infty} ||\Phi(w_0^{(n)})-\Phi(\tilde w_0^{(n)})||_s \geq \tilde C R
\]
whereas $||w_0^{(n)}-\tilde w_0^{(n)}||_s \to 0$ as $n \to \infty$. As $R \in (0,R_\ast)$ was arbitrary this proves the proposition.
\end{proof}

\bibliographystyle{plain}

\flushleft
\author{ Hasan Inci\\
EPFL SB MATHAA PDE \\
MA C1 627 (B\^atiment MA)\\ 
Station 8 \\
CH-1015 Lausanne\\
Schwitzerland\\
        {\it email: } {hasan.inci@epfl.ch}
}

\end{document}